\documentclass[11pt]{article}
\usepackage[a4paper,top=2.54cm,bottom=2.54cm,left=3.17cm,right=3.17cm]{geometry}
\usepackage[shortlabels]{enumitem}
\setenumerate[1]{itemsep=0pt,partopsep=0pt,parsep=\parskip,topsep=0pt}
\usepackage{amsmath,amsthm,amssymb,mathtools}
\usepackage[hidelinks]{hyperref}
\usepackage[numbers,sort&compress]{natbib}
\usepackage{indentfirst}
\allowdisplaybreaks

\newtheorem{theorem}{Theorem}[section]
\newtheorem{lemma}[theorem]{Lemma}

\newtheorem{corollary}[theorem]{Corollary}

\newtheorem{conjecture}{Conjecture}[section]

\newtheorem{proposition}[theorem]{Proposition}

\usepackage{authblk}

\newcommand{\gpk}{\gamma_{P,k}}
\newcommand{\card}[1]{\lvert #1\rvert}
\newcommand{\components}{\mathcal{C}}
\newcommand{\coverfamily}{\mathcal{R}}
\newcommand{\stronggraph}{G_{\mathrm{s}}}

\title{Tight bounds for generalized power domination in regular graphs}
\author[a]{Hangdi Chen\thanks{E-mail: \texttt{chenhangdi188@126.com}}}
\author[b]{Changhong Lu\thanks{E-mail: \texttt{chlu@math.ecnu.edu.cn}}}
\author[b]{Qingjie Ye\thanks{E-mail: \texttt{qjye@math.ecnu.edu.cn}}}
\affil[a]{Fujian Key Laboratory of Financial Information Processing, Key Laboratory of Applied Mathematics of Fujian Province University, Putian University, Fujian Putian 351100, China}
\affil[b]{School of Mathematical Sciences, Key Laboratory of MEA (Ministry of Education), Shanghai Key Laboratory of PMMP, Nantong Institute for Applied Mathematics and Artificial Intelligence, East China Normal University, Shanghai 200241, China}
\date{}

\begin{document}
\maketitle

\begin{abstract}
Dorbec et al.\ [\emph{SIAM J. Discrete Math.}, 27 (2013)] conjectured that, for all integers $k\geq1$ and $r\geq3$, every connected $r$-regular graph $G$ of order $n$, other than $K_{r,r}$, satisfies $\gamma_{P,k}(G)\leq n/(r+1)$.  After disproving this conjecture, Chen et al.\ [\emph{Graphs Combin.}, 38 (2022)] proposed a corresponding conjecture for claw-free regular graphs.
In this paper, we prove this conjecture: for integers $k\geq\ell\geq1$, every connected claw-free $(k+\ell+1)$-regular graph $G$ of order $n$ satisfies $\gamma_{P,k}(G)\leq n/(k+\ell+2)$, and this bound is tight.  Moreover, without the claw-free assumption, we show that, for each fixed integer $k\geq1$, the supremum of $\gamma_{P,k}(G)/\lvert V(G)\rvert$ over all connected $r$-regular graphs $G$ is asymptotic to $(\ln r)/r$ as $r\to\infty$.
\end{abstract}

\noindent\textbf{Keywords.}
generalized power domination, domination number, regular graphs, claw-free graphs

\medskip
\noindent\textbf{MSC 2020.} 05C69, 05C75

\section{Introduction}

Power domination arose from the problem of placing as few phasor measurement units as possible while monitoring an electrical network. Baldwin et al.\ \cite{BaldwinMiliBoisenAdapa1993} formulated the monitoring problem, and Haynes et al.\ \cite{HaynesEtAl2002} recast it as the graph-theoretic parameter now called power domination. See also the survey of Dorbec \cite{Dorbec2020}.

Power domination consists of an initial domination step and a subsequent propagation process, where a monitored vertex with exactly one unmonitored neighbor can monitor that neighbor. This propagation rule distinguishes power domination from domination and enables a small initial set to monitor vertices beyond its closed neighborhood \cite{HaynesEtAl2002,Dorbec2020}. Chang et al.~\cite{ChangEtAl2012} introduced $k$-power domination by allowing a monitored vertex with at most $k$ unmonitored neighbors to monitor all of them. The cases $k=0$ and $k=1$ correspond to domination and power domination, respectively \cite{ChangEtAl2012,Dorbec2020}.

We use the following notation and recall the monitoring process. All graphs considered here are finite, simple, and undirected. For a graph $G$, write $V(G)$ and $E(G)$ for its vertex and edge sets. The size of $V(G)$ is the \emph{order} of $G$. For every $v\in V(G)$, let $N_G(v)$ and $N_G[v]=N_G(v)\cup\{v\}$ denote its \emph{open} and \emph{closed neighborhoods}. For $S\subseteq V(G)$, let $N_G[S]=\bigcup_{v\in S}N_G[v]$. A set $S$ is a \emph{dominating set} if $N_G[S]=V(G)$, and the minimum cardinality of such a set is the \emph{domination number} $\gamma(G)$. A graph is \emph{claw-free} if it has no induced copy of $K_{1,3}$. We omit the subscript $G$ when the graph is clear.

Let $k\geq0$, $i\ge 0$ and $S\subseteq V(G)$.  The set of vertices monitored by $S$ at step $i$ is defined recursively by the following rules:

(1) $P_G^{0}(S)= N_G[S]$;

(2) $P_G^{i+1}(S)=\bigcup \{N_G[v]: v\in P_G^{i}(S)$ such that $|N_G[v] \setminus P_G^{i}(S)|\leq k\}$.

\noindent If $P_G^{i_0}(S)=P_G^{i_0+1}(S)$ for some $i_0$, then $P_G^{j}(S)=P_G^{i_0}(S)$ for every $j\ge i_0$ and we accordingly define $P_G^{\infty}(S)=P_G^{i_0}(S)$. If $P_G^\infty(S)=V(G)$, then $S$ is a \emph{$k$-power dominating set} of $G$, abbreviated \emph{$k$-PD-set}.  The minimum cardinality of a $k$-PD-set in $G$ is the \emph{$k$-power domination number} $\gamma_{P,k}(G)$.

For a connected $(k+1)$-regular graph $G$, it is clear that $\gamma_{P,k}(G)=1$. Then it is natural to study the $k$-power domination number of $(k+2)$-regular graphs. Zhao et al.~\cite{ZhaoKangChang2006} proved that if $G$ is a $3$-regular claw-free graph on $n$ vertices, then $\gamma_{P,1}(G)\le n/4$. Chang et al.~\cite{ChangEtAl2012} generalized this result to $(k+2)$-regular claw-free graphs. Dorbec et al.~\cite{DorbecEtAl2013} removed the claw-free condition and showed that every connected $(k+2)$-regular graph $G\neq K_{k+2,k+2}$ of order $n$ satisfies $\gamma_{P,k}(G)\leq n/(k+3)$, and the bound is sharp. They then posed the following conjecture.

\begin{conjecture}[Dorbec et al.\ \cite{DorbecEtAl2013}]\label{DorbecConjecture}
Let $k\geq1$ and $r\geq3$.  If $G\neq K_{r,r}$ is a connected $r$-regular graph of order $n$, then $\gamma_{P,k}(G)\leq n/(r+1)$.
\end{conjecture}

The result of Dorbec et al.~\cite{DorbecEtAl2013} implies that Conjecture~\ref{DorbecConjecture} holds when $k\geq r-2$. Lu et al.~\cite{LuMaoWang2020} showed that it fails for $k=1$ and every even $r\geq4$. Chen et al.~\cite{ChenLuYe2022} proved that it fails throughout the remaining range: for every $r\geq4$ and $1\leq k\leq r-3$, they constructed a connected $r$-regular counterexample. Yang and Wu~\cite{YangWu2022} also obtained counterexamples to Conjecture~\ref{DorbecConjecture}.

Chen et al.~\cite{ChenLuYe2022} further constructed claw-free $r$-regular counterexamples for $k<\lfloor r/2\rfloor$. These examples show that the restriction to claw-free graphs can yield the conjectured bound only when $k\geq\lfloor r/2\rfloor$. Motivated by this threshold, Chen et al.~\cite{ChenLuYe2022} proposed the following conjecture for claw-free regular graphs. Writing $r=k+\ell+1$, the condition $k\geq\lfloor r/2\rfloor$ is equivalent to $k\geq\ell$.
\begin{conjecture}[Chen et al.\ \cite{ChenLuYe2022}]\label{ChenLuYeConj}
For integers $k\ge \ell\ge 1$, if $G$ is a connected claw-free $(k+\ell+1)$-regular graph of order $n$, then $\gpk(G)\le n/(k+\ell+2)$ and the bound is tight.
\end{conjecture}
The case $\ell=1$ follows from an earlier result of Chang et al.\ \cite{ChangEtAl2012}, and Chen et al.~\cite{ChenLuYe2022} proved the cases $\ell\in\{2,3\}$. In this paper, we confirm Conjecture~\ref{ChenLuYeConj} completely.

\begin{theorem}\label{thm:main}
Let $k\geq\ell\geq1$ be integers.  If $G$ is a connected claw-free $(k+\ell+1)$-regular graph of order $n$, then $\gpk(G)\le n/(k+\ell+2)$ and the bound is tight.
\end{theorem}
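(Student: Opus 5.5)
Write $r=k+\ell+1$. The plan is to combine a local structure theorem for claw-free $r$-regular graphs with a domination-type counting argument in which every chosen vertex is ``charged'' to at least $r+1$ vertices.

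\textbf{Key structural fact.} In a claw-free graph, $N(v)$ has independence number at most $2$. So $N(v)$ splits into at most two cliques together with some cross edges, and the complement of $G[N(v)]$ is triangle-free.

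\textbf{Propagation step.} Let $S$ be a set and $v$ a monitored vertex whose closed neighborhood contains a monitored clique of size at least $\ell+1$ (including $v$). Then $v$ has at most $r-\ell=k+1$ possibly unmonitored neighbors; any one additional monitored neighbor brings this to at most $k$, so $v$ propagates. The plan is to use this to show that, once a closed neighborhood $N[u]$ is monitored, monitoring spreads into adjacent ``clique-like'' regions. Since $k\ge \ell$, every vertex has at least $\lceil r/2\rceil-1\ge \ell$ neighbors in one of its two cliques. So for any monitored vertex $w$ whose larger neighborhood-clique $Q$ is fully monitored, at most $r-|Q|\le k+1$ neighbors remain unmonitored.

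\textbf{Main construction.} Choose $S$ to be a maximal set of vertices with pairwise disjoint closed neighborhoods, i.e.\ a $2$-packing; this gives $|S|(r+1)\le n$ immediately. I would then try to prove that a suitably chosen maximal $2$-packing (for instance one maximizing $|P^\infty(S)|$, then extremal among those) is a $k$-PD-set.

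Suppose instead that $U=V\setminus P^\infty(S)$ is nonempty. By connectivity there is a monitored vertex $w$ with neighbors in $U$, and since $w$ cannot propagate, it has at least $k+1$ unmonitored neighbors. Claw-freeness then forces these unmonitored neighbors, together with the monitored part of $N(w)$, into two cliques. Because $k\ge\ell$, the unmonitored side must essentially be a clique of size at least $k+1$ attached to $w$. Next, take a vertex $x\in U$ in that clique. By the maximality of the $2$-packing, $N[x]$ meets $N[s]$ for some $s\in S$. I would exchange $s$ for a vertex of $U$, or add a vertex, to monitor strictly more while keeping the packing property. The regularity and the clique sizes should make the new closed neighborhood contain the whole unmonitored clique, contradicting the extremal choice.

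\textbf{Main obstacle.} The exchange argument is the hard step. Replacing $s$ may destroy monitoring elsewhere, so one needs a potential function, for example lexicographic on $(|P^\infty(S)|,\ \text{number of fully monitored cliques})$, and a careful case analysis of how two $r$-regular claw-free cliques of size $\ge k+1$ can intersect. If the exact $2$-packing approach fails, I would fall back to a discharging argument: assign each vertex charge $1$ and show that each $s\in S$ can collect at least $r+1$ through the propagation tree it initiates.

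\textbf{Tightness.} Take a cycle of copies of a gadget $H$ on $r+1$ vertices, namely $K_{r+1}$ minus a perfect-matching-like set of edges, with consecutive gadgets joined by edges so that the result is $r$-regular and claw-free. Verify that every gadget needs its own vertex of $S$: a gadget left without one has at least $k+1$ unmonitored vertices adjacent to each boundary vertex, so it is never monitored. This gives $\gamma_{P,k}=n/(r+1)$.
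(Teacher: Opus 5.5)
\textbf{There is a genuine gap: the upper bound is not proved.} The inequality $|S|(r+1)\le n$ for a $2$-packing is the easy half. All the content lies in showing that some packing monitors all of $G$, and you leave this to an exchange argument you do not carry out.

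Choosing $S$ to maximize $|P^\infty(S)|$ is legitimate; a posteriori the maximum is $n$. But proving that the maximum is $n$ is the theorem itself. Your sketch asserts without justification that the new closed neighborhood swallows the whole unmonitored clique, and it concedes that removing $s$ may destroy monitoring elsewhere.

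A real selection principle is needed, because an arbitrary maximal packing fails. Take the necklace of three copies $A_i=K_{r+1}-x_ix_i'$ with $x_i'$ joined to $x_{i+1}$ (indices mod $3$). Then $\{x_1',x_3\}$ is a maximal $2$-packing. Yet $A_2\setminus\{x_2,x_2'\}$ is a $k$-fort, since $x_2$ and $x_2'$ each have $r-1\ge k+1$ neighbors in it, and its closed neighborhood $A_2$ misses $S$.

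Worse, the structural fact driving your case analysis is false. $\alpha(G[N(v)])\le 2$ only makes $\overline{G[N(v)]}$ triangle-free, not bipartite. The icosahedron is connected, claw-free and $5$-regular, so $k=\ell=2$ is admissible, and every $N(v)$ induces a $C_5$, which cannot be covered by two cliques. So your step that the unmonitored neighbors of a stuck vertex ``must essentially be a clique of size at least $k+1$'' has no support.

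\textbf{The missing idea is to organize propagation around edges rather than cliques.} Call $uv$ strong if $|N(u)\cap N(v)|\ge\ell$.
\begin{enumerate}
\item Once $N[u]$ is monitored, $v$ has the $\ell+1$ monitored neighbors $u$ and $N(u)\cap N(v)$. So it has at most $k$ unmonitored ones and fires. Hence a single vertex of a component $H$ of the strong-edge graph monitors $N[V(H)]$.
\item Claw-freeness and $k\ge\ell$ enter only through one fact: the weak neighbors of any vertex $v$ form a clique. Two nonadjacent weak neighbors would each have at least $k+1$ non-neighbors in $N(v)$. These sets are disjoint because $\overline{G[N(v)]}$ is triangle-free, forcing $r\ge 2k+2$, which contradicts $r\le 2k+1$.
\item Take an inclusion-minimal family $\mathcal{R}$ of strong components with $\bigcup_{H\in\mathcal{R}}N[V(H)]=V(G)$. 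Minimality and the clique property give each $C\in\mathcal{R}$ a vertex $v_C\in V(C)$ with no neighbor in any other member of $\mathcal{R}$. The clique property again makes the sets $N[v_C]$ pairwise disjoint.
\end{enumerate}
So $\{v_C:C\in\mathcal{R}\}$ is exactly the object you were after, a $2$-packing that is a $k$-PD-set, and it is obtained directly with no exchange argument.

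\textbf{On tightness} (which the paper only cites from Chen et al.), your gadget needs care. If two disjoint edges $xx'$ and $aa'$ are deleted from $K_{r+1}$, then $x$, its unique outside neighbor, $a$ and $a'$ induce a claw. The necklace of copies of $K_{r+1}-e$ is the right construction, and your fort argument then goes through.
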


Without the claw-free assumption, determining the best possible upper bound for $\gamma_{P,k}(G)$ in connected $r$-regular graphs remains open for $1\le k\le r-3$. Dorbec \cite{Dorbec2020} posed this problem for connected regular graphs. To investigate its asymptotic behavior, we define the $k$-power domination ratio as
\[
c_k(r)=\sup\left\{\frac{\gamma_{P,k}(G)}{|V(G)|}:G\text{ is a connected
$r$-regular graph}\right\}.
\]
In this paper, we also determine the asymptotic behavior of $c_k(r)$.
\begin{theorem}\label{thm:asymptotic}
Fix an integer $k\geq1$, and let $q=2\lceil(k+1)/2\rceil$ and $d=\lfloor r/q\rfloor$. For every $\varepsilon>0$ and all sufficiently large integers $r$,
\begin{equation}\label{eq:c-bounds}
  (1-\varepsilon)\frac{\ln d}{qd}\leq c_k(r)\leq 1-\frac{r}{(r+1)^{1+1/r}}.
\end{equation}
Moreover, $c_k(r)\sim(\ln r)/r$ as $r\to\infty$.
\end{theorem}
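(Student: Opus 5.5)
The statement immediately above is Theorem~\ref{thm:asymptotic}, and I would prove its two inequalities separately and then deduce the asymptotic claim.

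\emph{Upper bound.} Every dominating set is a $k$-PD-set, so $\gamma_{P,k}(G)\le\gamma(G)$. For an $r$-regular graph, the standard probabilistic argument gives $\gamma(G)\le n\bigl(1-r/(r+1)^{1+1/r}\bigr)$; this is the Alon--Spencer / Payan type bound $\gamma\le n(1-\prod_{j=1}^{\delta+1}\frac{j\delta}{j\delta+1})$ specialised to $\delta=r$. Concretely, I would include each vertex independently with probability $p$, add every undominated vertex, and use $\mathbb E|S|\le np+n(1-p)^{r+1}$. Choosing $p=1-(r+1)^{-1/r}$ and simplifying gives
\[
np+n(1-p)^{r+1}=n\Bigl(1-(r+1)^{-1/r}+(r+1)^{-(r+1)/r}\Bigr)=n\Bigl(1-\frac{r}{(r+1)^{1+1/r}}\Bigr).
\]
This is routine, and asymptotically it equals $(1+o(1))\ln r/r$.

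\emph{Lower bound.} I need connected $r$-regular graphs in which $\gamma_{P,k}$ is about $\frac{\ln d}{qd}n$. The plan is to start from a $d$-regular base graph $H$ with large girth and domination number at least $(1-\varepsilon)\frac{\ln d}{d}|V(H)|$; random $d$-regular graphs have this property with high probability. I then blow up each vertex of $H$ into a gadget of $q$ vertices and replace each edge by a complete bipartite $K_{q,q}$ between the two gadgets. Each vertex then has degree $qd$. To reach degree exactly $r$, I would add $r-qd<q$ further edges per vertex, either inside gadgets or through a few extra internal structures, without creating shortcuts for propagation.

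The point of $K_{q,q}$ with $q\ge k+1$ is that propagation is blocked. A monitored vertex $v$ in gadget $X_u$ whose neighbouring gadget $X_w$ is entirely unmonitored has at least $q>k$ unmonitored neighbours, so it cannot force. Moreover, a gadget is monitored either all at once or not at all, and a vertex sees whole gadgets. Using this, I would show that the set $D=\{u\in V(H): X_u$ contains a vertex of $S$, or $X_u$ gets forced from a partially monitored state$\}$ essentially dominates $H$. More carefully, I would argue that the monitored gadgets after closure are $N_H[D]$ together with nothing more, because forcing from a gadget into a fresh gadget needs at most $k$ unmonitored neighbours, which is impossible once any neighbouring gadget is unmonitored. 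The parity choice $q$ even is there to make degree adjustments possible. Hence $\gamma_{P,k}(G)\ge\gamma(H)\approx\frac{\ln d}{d}\cdot\frac{n}{q}$, which is the lower bound in~\eqref{eq:c-bounds}.

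\emph{Asymptotics and main obstacle.} Since $d\sim r/q$, the lower bound is $\sim(\ln r)/r$, and with the upper bound this gives $c_k(r)\sim(\ln r)/r$. The main obstacle is the lower-bound construction. The padding edges for $r\not\equiv0\pmod q$ must not give a vertex with at most $k$ unmonitored neighbours prematurely, and the final domination step must be handled: a vertex whose only unmonitored neighbours lie in one gadget has $q>k$ of them, so it cannot propagate there either. I would first try adding the extra $r-qd$ edges as a regular graph inside each gadget, enlarged to size $q'$ if needed, and check that intra-gadget edges never enable forcing across a $K_{q,q}$ interface.
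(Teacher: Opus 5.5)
Your plan is the paper's. The upper bound uses $\gamma_{P,k}\le\gamma$ together with the Caro--Roditty bound. Your first-moment computation with $p=1-(r+1)^{-1/r}$ reproduces exactly that bound, which the paper only cites; the product formula you attribute it to is a different expression, but your direct computation is self-contained and correct. The lower bound is a blow-up of an Alon--Wormald graph $H$ into gadgets of size $q$ joined by $K_{q,q}$'s. The upper bound and the asymptotics are fine.

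\textbf{The gap} is in the deduction $\gamma_{P,k}(G)\ge\gamma(H)$, which is not established by what you wrote.
\begin{enumerate}
\item Your claim that a gadget is monitored ``all at once or not at all'' is false. If $x\in S\cap X_v$ and $S$ misses every gadget adjacent to $X_v$, then $P^0_G(S)\cap X_v=N_G[x]\cap X_v$. This is a proper subset of $X_v$ unless the padding graph inside gadgets is complete.
\item For the same reason, your remark that a vertex whose unmonitored neighbours lie in one gadget has $q$ of them fails for partially monitored gadgets.
\item More importantly, your set $D$ includes gadgets ``forced from a partially monitored state''. These need not contain any vertex of $S$. So even if $D$ dominates $H$, you have no bound $|D|\le|S|$, and the inequality does not follow.
\end{enumerate}

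\textbf{The fix} is to apply your own blocking observation to the \emph{first} monitored vertex of a gadget. This is exactly the paper's $k$-fort argument (Proposition~\ref{prop:fort-obstruction}).
\begin{enumerate}
\item Put $T=\{v\in V(H):S\cap X_v\neq\varnothing\}$, so $|T|\le|S|$.
\item If $v\notin N_H[T]$, then $S\cap N_G[X_v]=\varnothing$, so $X_v\cap P^0_G(S)=\varnothing$.
\item Take the first step $i+1$ at which a vertex of $X_v$ becomes monitored. The responsible vertex $w\in P^i_G(S)$ lies outside $X_v$, hence in some $X_u$ with $uv\in E(H)$.
\item Then $w$ is adjacent to all $q\ge k+1$ vertices of $X_v$, all still unmonitored, so it cannot force. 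This is a contradiction, so $X_v$ is never monitored.
\item Since $S$ is a $k$-PD-set, this is impossible. Hence $T$ dominates $H$ and $|S|\ge|T|\ge\gamma(H)$.
\end{enumerate}

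This argument never looks inside a gadget, so your ``main obstacle'' disappears: the padding edges are irrelevant to propagation. No enlargement to $q'$ is needed, because $q$ even guarantees an $s$-regular graph on $q$ vertices for every $0\le s=r-qd\le q-1$.

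Two smaller points. Large girth of $H$ is unnecessary. You do, however, need $G$ connected, since $c_k(r)$ ranges over connected graphs. The paper replaces $H$ by a component with the same domination ratio, using additivity of $\gamma$ over components; connectivity of random regular graphs would also do.
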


For each fixed $k$, Conjecture~\ref{DorbecConjecture} would imply $c_k(r)=1/r$ for all sufficiently large $r$, with equality attained by $K_{r,r}$. In contrast, Theorem~\ref{thm:asymptotic} shows that the $k$-power domination problem has the same asymptotic behavior as the domination problem, namely $(\ln r)/r$.

The remainder of the paper is organized as follows. In Section~\ref{sec:proof}, we prove Theorem~\ref{thm:main} using the structure of strong components in claw-free regular graphs. In Section~\ref{sec:unrestricted}, we remove the claw-free assumption and prove Theorem~\ref{thm:asymptotic}.

\section{A sharp bound for claw-free regular graphs}
\label{sec:proof}

In this section, fix integers $k\geq\ell\geq1$ and a connected claw-free $r$-regular graph $G$, where $r=k+\ell+1$. An edge $uv\in E(G)$ is \emph{strong} when $\card{N_G(u)\cap N_G(v)}\geq\ell$ and \emph{weak} otherwise. Let $\stronggraph$ be the spanning subgraph of $G$ whose edge set consists of the strong edges. A connected component of $\stronggraph$ is called a \emph{strong component} of $G$. For each $v\in V(G)$, let $S(v)$ and $W(v)$ denote the \emph{strong} and \emph{weak neighborhoods} of $v$, respectively. More precisely, $S(v)=\{u\in N_G(v):uv\text{ is strong}\}$ and $W(v)=\{u\in N_G(v):uv\text{ is weak}\}$.

The following lemma gives the local structure behind the strong-component decomposition: the weak neighbors of a vertex form a small clique.

\begin{lemma}\label{lem:weak-clique}
For every $v\in V(G)$, $W(v)$ is a clique of $G$. Moreover, $|W(v)|\le \ell$ and $|S(v)|\ge k+1$.
\end{lemma}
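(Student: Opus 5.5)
The plan is to work entirely inside the neighborhood $N_G(v)$, which has $r=k+\ell+1$ vertices. Since $G$ is claw-free, $N_G(v)$ contains no three pairwise nonadjacent vertices. Otherwise those three vertices together with $v$ would induce a $K_{1,3}$. The weakness condition bounds how many vertices of $N_G(v)$ a weak neighbor can see. Combining this with claw-freeness should force any two weak neighbors to be adjacent. The two cardinality claims then follow by counting.

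First I will show that $W(v)$ is a clique, arguing by contradiction. Suppose $u,w\in W(v)$ are distinct and nonadjacent. Because $uv$ is weak, $\card{N_G(u)\cap N_G(v)}\le \ell-1$, and similarly $\card{N_G(w)\cap N_G(v)}\le \ell-1$. Now take any $x\in N_G(v)\setminus\{u,w\}$. If $x$ were adjacent to neither $u$ nor $w$, then $\{u,w,x\}$ would be independent and $v$ together with $u,w,x$ would induce a claw. Hence every such $x$ lies in $(N_G(u)\cup N_G(w))\cap N_G(v)$. This gives
\[
r-2=\card{N_G(v)\setminus\{u,w\}}\le \card{N_G(u)\cap N_G(v)}+\card{N_G(w)\cap N_G(v)}\le 2\ell-2 .
\]
Substituting $r=k+\ell+1$ yields $k+\ell-1\le 2\ell-2$, that is, $k\le \ell-1$. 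This contradicts the standing assumption $k\ge\ell$. Therefore $W(v)$ is a clique.

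Next I bound $\card{W(v)}$. If $W(v)$ is empty there is nothing to prove, so pick $u\in W(v)$. Since $W(v)$ is a clique contained in $N_G(v)$, we get $W(v)\setminus\{u\}\subseteq N_G(u)\cap N_G(v)$. Weakness of $uv$ then gives $\card{W(v)}-1\le \ell-1$, so $\card{W(v)}\le \ell$. Finally, $N_G(v)$ is the disjoint union of $S(v)$ and $W(v)$, so $\card{S(v)}=r-\card{W(v)}\ge k+\ell+1-\ell=k+1$.

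The only step needing real care is the first one. The key choice there is to count over $N_G(v)\setminus\{u,w\}$, excluding $u$ and $w$ themselves; this gives the clean inequality $r-2\le 2\ell-2$. This is exactly where the hypothesis $k\ge\ell$ is used. Nothing beyond claw-freeness and the definition of weak edges is needed.
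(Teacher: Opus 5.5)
Your proof is correct and is essentially the paper's argument. The paper phrases the clique step in the complement $\overline{G[N_G(v)]}$: that graph is triangle-free, so two nonadjacent weak neighbors $u,w$ have disjoint neighborhoods there. This is exactly your observation that every $x\in N_G(v)\setminus\{u,w\}$ is adjacent to $u$ or $w$, and it gives the same contradiction $r\le 2\ell$; the bounds $\card{W(v)}\le\ell$ and $\card{S(v)}\ge k+1$ are then derived identically.
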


\begin{proof}
Let $H=\overline{G[N_G(v)]}$. If $H$ contains a triangle with vertices $a,b,c$, then $v$ together with $a,b,c$ induces a claw in $G$ with center $v$, contradicting that $G$ is claw-free. So $H$ is triangle-free. For every $u\in W(v)$, the definition of $W(v)$ gives $\card{N_G(u)\cap N_G(v)}\leq\ell-1$. As $H$ has $r$ vertices,
\begin{equation}\label{eq:complement-degree}
  d_H(u)
  =(r-1)-\card{N_G(u)\cap N_G(v)}
  \geq (r-1)-(\ell-1)=r-\ell=k+1.
\end{equation}
If $|W(v)|\le 1$, then $W(v)$ is a clique. Now suppose that $|W(v)|\ge 2$. If $W(v)$ were not a clique, there would be nonadjacent vertices $u,w\in W(v)$. Then $uw\in E(H)$. Since $H$ is triangle-free, we have $N_H(u)\cap N_H(w)=\varnothing$. It follows that $d_H(u)+d_H(w)\leq\card{V(H)}=r$. On the other hand, \eqref{eq:complement-degree} gives $d_H(u)+d_H(w)\geq2k+2$, and so $r\ge 2k+2$. However, $k\geq\ell$ yields $r=k+\ell+1\leq2k+1$, a contradiction. Thus $W(v)$ is a clique.

If $|W(v)|\le 1$, then $|W(v)|\le \ell$. If $|W(v)|\ge 2$, choose $x\in W(v)$.  The vertices of $W(v)\setminus\{x\}$ are common neighbors of $x$ and $v$, so
\[
  |W(v)|-1=|W(v)\setminus\{x\}|\leq\card{N_G(x)\cap N_G(v)}\leq\ell-1.
\]
Therefore $|W(v)|\leq\ell$.  Since $d_G(v)=r$, we have \[|S(v)|=d_G(v)-|W(v)|\ge r-\ell=k+1.\qedhere\]
\end{proof}

As an immediate consequence, the neighbors of a vertex that lie outside its strong component also form a clique.

\begin{corollary}\label{cor:outside-clique}
If $H$ is a strong component of $G$ and $v\in V(H)$, then $N(v)\setminus V(H)$ is a clique.
\end{corollary}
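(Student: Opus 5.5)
The plan is to show that $N(v)\setminus V(H)$ is contained in $W(v)$ and then apply Lemma~\ref{lem:weak-clique}.

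Let $H$ be a strong component of $G$, let $v\in V(H)$, and take any $u\in N(v)\setminus V(H)$. The first step is to show that the edge $uv$ is weak. Suppose instead that $uv$ is strong. Then $uv$ is an edge of $\stronggraph$. Strong components are the connected components of $\stronggraph$, so $u$ and $v$ would lie in the same strong component, giving $u\in V(H)$. This contradicts the choice of $u$. Hence $uv$ is weak, that is, $u\in W(v)$. This proves
\[
N(v)\setminus V(H)\subseteq W(v).
\]

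The second step is to invoke Lemma~\ref{lem:weak-clique}, which says that $W(v)$ is a clique of $G$. Any subset of a clique is again a clique, so $N(v)\setminus V(H)$ is a clique. This covers the degenerate cases as well, since the empty set and a singleton are cliques.

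There is no real obstacle here. The only point to check is that being a strong component is defined through connectivity in $\stronggraph$, which is exactly what forces every strong edge at $v$ to stay inside $H$.
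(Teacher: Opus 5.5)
Your proof is correct and is essentially the paper's argument. Strong edges stay inside a component of $\stronggraph$, so $N(v)\setminus V(H)\subseteq W(v)$, and Lemma~\ref{lem:weak-clique} then gives the clique property.
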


\begin{proof}
Since $H$ is a strong component of $G$, every edge from $v$ to a vertex outside $H$ is weak. Hence $N(v)\setminus V(H)\subseteq W(v)$, and the conclusion follows from Lemma~\ref{lem:weak-clique}.
\end{proof}

Let $\components$ be the set of all strong components.  For a vertex $v\in V(G)$, let $C(v)$ denote the unique strong component containing $v$ and define \[F(v)=\{C(v)\}\cup\{H\in\components:N_G(v)\cap V(H)\neq\varnothing\}.\]
A subfamily $\coverfamily\subseteq\components$ is a \emph{strong-component cover} if $F(v)\cap\coverfamily\neq\varnothing$ for every $v\in V(G)$.
Equivalently, $V(G)=\bigcup_{H\in\coverfamily}N_G[V(H)]$.

Since the finite family $\components$ is itself a strong-component cover, choose an inclusion-minimal subcover $\coverfamily\subseteq\components$. To turn $\coverfamily$ into a small $k$-PD-set, we need suitable representatives, disjoint allocations of vertices, and a propagation property. These ingredients are established in Lemmas~\ref{lem:internal-private}--\ref{lem:activate-component}.

\begin{lemma}\label{lem:internal-private}
For every $C\in\coverfamily$, there exists $v_C\in V(C)$ such that $F(v_C)\cap\coverfamily=\{C\}$. Moreover, $v_C$ has no neighbor in any member of $\coverfamily\setminus\{C\}$.
\end{lemma}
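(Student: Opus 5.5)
The first assertion should follow from the minimality of $\coverfamily$. The second will then come from the first, using the fact that $F(v)$ contains every strong component meeting $N_G(v)$.

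For the first assertion, fix $C\in\coverfamily$. Since $\coverfamily$ is an inclusion-minimal strong-component cover, the family $\coverfamily\setminus\{C\}$ is not a cover. So there is a vertex $u\in V(G)$ with $F(u)\cap(\coverfamily\setminus\{C\})=\varnothing$. Because $\coverfamily$ is a cover, $F(u)\cap\coverfamily\neq\varnothing$, and therefore $F(u)\cap\coverfamily=\{C\}$. Now $C\in F(u)$ means that either $u\in V(C)$ or $u$ has a neighbour in $C$. In the first case we simply take $v_C=u$.

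The second case, where $u\notin V(C)$ but $u$ has a neighbour $w\in V(C)$, is the main obstacle: we must move to a vertex inside $C$ without picking up another member of $\coverfamily$. The first attempt is to show directly that the neighbour $w$ itself satisfies $F(w)\cap\coverfamily=\{C\}$. If that fails, the fallback is to choose some other vertex of $C$ (for example, one of its strong neighbours) and show that it works instead. The tools are the strong structure of $C$ and claw-freeness. The edge $uw$ is weak, since $u\notin V(C)$, so $u\in W(w)$. By Lemma~\ref{lem:weak-clique}, $W(w)$ is a clique of size at most $\ell$ and $|S(w)|\ge k+1$. Suppose $w$ has a neighbour $x$ in some $D\in\coverfamily\setminus\{C\}$. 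Then $wx$ is weak as well, and Corollary~\ref{cor:outside-clique} applied to $w$ makes $u$ and $x$ adjacent. Hence $D\in F(u)$, because $x\in N(u)\cap V(D)$ (or $D=C(u)$ when $x\in C(u)$), contradicting $F(u)\cap\coverfamily=\{C\}$. The same reasoning applies to the possibility $w\in V(D)$, which cannot happen because $w\in V(C)$ and strong components are disjoint. So every member of $F(w)\cap\coverfamily$ other than $C$ would already lie in $F(u)$, which gives $F(w)\cap\coverfamily=\{C\}$. If the direct argument turns out to miss a case, for instance when $x=u$ or when $C(u)$ must be tracked separately, I will handle it by considering $C(u)$ explicitly. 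If $C(u)\in\coverfamily$ then $C(u)\in F(u)$, so $C(u)=C$, contradicting $u\notin V(C)$. Hence $C(u)\notin\coverfamily$, and such a neighbour $x$ of $w$ is harmless.

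The second assertion is immediate from the first. If $v_C$ had a neighbour in some $D\in\coverfamily\setminus\{C\}$, then by definition $D\in F(v_C)$. This contradicts $F(v_C)\cap\coverfamily=\{C\}$.
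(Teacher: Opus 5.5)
Your proof is correct and follows the paper's route. Minimality of $\coverfamily$ gives a vertex $u$ with $F(u)\cap\coverfamily=\{C\}$; if $u\notin V(C)$ you pass to a neighbour $w\in V(C)$ and apply Corollary~\ref{cor:outside-clique} at $w$ to push any neighbour $x$ in another member of $\coverfamily$ back into $F(u)$. Your observation that $C(u)\notin\coverfamily$, which rules out $x=u$, settles a degenerate case the paper leaves implicit, and the hedged ``fallback'' is never needed.
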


\begin{proof}
Since $\coverfamily$ is inclusion-minimal, $\coverfamily\setminus\{C\}$ is not a strong-component cover.  Hence there is a vertex $p$ such that $F(p)\cap(\coverfamily\setminus\{C\})=\varnothing$. Since $\coverfamily$ covers $p$, it follows that $F(p)\cap\coverfamily=\{C\}$.
If $p\in V(C)$, set $v_C=p$.

Now suppose that $p\notin V(C)$.  Since $C\in F(p)$, choose $v\in N_G(p)\cap V(C)$.  If $v$ had a neighbor $x$ in some $C'\in\coverfamily\setminus\{C\}$, then both $p$ and $x$ would lie in $N_G(v)\setminus V(C)$.  By Corollary~\ref{cor:outside-clique}, $px\in E(G)$.  This would put $C'$ in $F(p)\cap\coverfamily$, contradicting $F(p)\cap\coverfamily=\{C\}$. Thus $F(v)\cap\coverfamily=\{C\}$, and we take $v_C=v$.
\end{proof}

For each $C\in\coverfamily$, fix a representative $v_C$ supplied by Lemma~\ref{lem:internal-private}. The following lemma shows that the private representatives have pairwise disjoint closed neighborhoods.

\begin{lemma}\label{lem:allocation}
If $C,C'\in\coverfamily$ and $C\neq C'$, then $N_G[v_C]\cap N_G[v_{C'}]=\varnothing$. Consequently, $n\geq(r+1)\card{\coverfamily}$.
\end{lemma}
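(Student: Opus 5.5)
We argue by contradiction: suppose some vertex $x$ lies in $N_G[v_C]\cap N_G[v_{C'}]$ with $C\neq C'$. The only tool needed is the privacy property of Lemma~\ref{lem:internal-private}, namely $F(v_C)\cap\coverfamily=\{C\}$ and $F(v_{C'})\cap\coverfamily=\{C'\}$. We split according to the location of $x$.

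First, $x=v_C$ and $x=v_{C'}$ are both impossible. If $x=v_C$, then $v_C\in N_G[v_{C'}]$. Since $v_C\in V(C)$ and $v_C\neq v_{C'}$ (they lie in different strong components), $v_C$ is a neighbor of $v_{C'}$ lying in $C$. This puts $C\in F(v_{C'})\cap\coverfamily$, contradicting $F(v_{C'})\cap\coverfamily=\{C'\}$. The case $x=v_{C'}$ is symmetric.

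Otherwise $x$ is a common neighbor of $v_C$ and $v_{C'}$, and $x$ lies in some strong component $C(x)$. We have $C(x)\in F(v_C)$ because $v_C$ has the neighbor $x$ in $C(x)$, and likewise $C(x)\in F(v_{C'})$.
\begin{itemize}
\item If $C(x)\neq C$, then $x\in N(v_C)\setminus V(C)$.
\item If $C(x)=C$, then $x\in N(v_{C'})\setminus V(C')$ unless $C(x)=C'$. Both equalities cannot hold, since $C\neq C'$.
\end{itemize}
A cleaner route avoids Corollary~\ref{cor:outside-clique} entirely. If $x\in V(C)$, then $v_{C'}$ has the neighbor $x$ in $C$, so $C\in F(v_{C'})\cap\coverfamily$, which is a contradiction. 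Symmetrically, $x\in V(C')$ is impossible. Thus $x\notin V(C)\cup V(C')$.

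The remaining case is the key point. Since $x\notin V(C)\cup V(C')$, the edges $xv_C$ and $xv_{C'}$ cross between strong components and are therefore weak. Now $\coverfamily$ covers $x$, so some $D\in\coverfamily$ satisfies $D\in F(x)$. Without loss of generality take $D\neq C$; if $D=C$ we instead use the symmetric argument with $v_{C'}$, which works because $C\neq C'$. There are two subcases.
\begin{itemize}
\item If $D=C(x)$, then $v_C$ has the neighbor $x$ in $D$, so $D\in F(v_C)\cap\coverfamily$, contradicting $F(v_C)\cap\coverfamily=\{C\}$.
\item Otherwise $x$ has a neighbor $y\in V(D)$ with $y\notin V(C(x))$. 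We want to show that $y$ is adjacent to $v_C$, which again gives $D\in F(v_C)$ and the same contradiction. Both $y$ and $v_C$ are weak neighbors of $x$: the edge $xy$ is weak since $y\notin V(C(x))$, and $xv_C$ is weak as noted above. By Lemma~\ref{lem:weak-clique}, $W(x)$ is a clique, so $yv_C\in E(G)$ unless $y=v_C$. If $y=v_C$, then $D=C$, contrary to our choice of $D$.
\end{itemize}

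Hence the closed neighborhoods $N_G[v_C]$, $C\in\coverfamily$, are pairwise disjoint. Each has exactly $r+1$ vertices because $G$ is $r$-regular, so
\[
n\geq\sum_{C\in\coverfamily}\card{N_G[v_C]}=(r+1)\card{\coverfamily}.
\]

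The main obstacle is the common-neighbor case $x\notin V(C)\cup V(C')$. Its resolution rests on the observation that every edge leaving a strong component is weak, together with Lemma~\ref{lem:weak-clique} applied at the vertex $x$ rather than at the representatives.
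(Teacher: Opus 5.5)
Your proof is correct and is essentially the paper's argument: in both, the common neighbor $x$ has $v_C$ together with a vertex of another member of $\coverfamily$ in its clique of weak neighbors, which contradicts the privacy of $v_C$ from Lemma~\ref{lem:internal-private}. The paper simply takes that vertex to be $v_{C'}$ (since $C'\in F(x)\cap\coverfamily$ automatically, choosing $D=C'$ and $y=v_{C'}$ makes your appeal to the cover condition at $x$ and your symmetry reduction unnecessary), and it merges your cases $x\in V(C)\cup V(C')$ and $D=C(x)$ into the single case $C(x)\in\coverfamily$.
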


\begin{proof}
Let $C,C'\in\coverfamily$ be distinct. Then the vertices $v_C$ and $v_{C'}$ are distinct. They are not adjacent by Lemma~\ref{lem:internal-private}.

Suppose that $x\in N_G(v_C)\cap N_G(v_{C'})$, and let $D=C(x)$. If $D\in\coverfamily$, then $D\in F(v_C)\cap\coverfamily$ and $D\in F(v_{C'})\cap\coverfamily$. Lemma~\ref{lem:internal-private} would give $D=C=C'$, a contradiction.

Now suppose that $D\notin\coverfamily$. Since $C,C'\in\coverfamily$, both $v_C$ and $v_{C'}$ lie in $N_G(x)\setminus V(D)$. This set is a clique by Corollary~\ref{cor:outside-clique}, so $v_Cv_{C'}\in E(G)$, again contradicting Lemma~\ref{lem:internal-private}. Thus the closed neighborhoods are pairwise disjoint. Since $G$ is $r$-regular, each of them has order $r+1$, and the final inequality follows.
\end{proof}

The following lemma shows that one initially selected vertex monitors the closed neighborhood of its entire strong component.

\begin{lemma}\label{lem:activate-component}
Let $H\in \coverfamily$. For each $y\in V(H)$, $N_G[V(H)]\subseteq P_G^\infty(\{y\})$.
\end{lemma}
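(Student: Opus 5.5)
The plan is to propagate monitoring along strong edges, one strong edge at a time. The key observation is the following: whenever a vertex $v$ satisfies $N_G[v]\subseteq P_G^{i}(\{y\})$ for some step $i$, every strong neighbor $u$ of $v$ satisfies $N_G[u]\subseteq P_G^{i+1}(\{y\})$. Since $H$ is connected in $\stronggraph$, an induction along a spanning tree of $H$ (or along a path in $\stronggraph$ from $y$ to any target vertex) then gives $N_G[x]\subseteq P_G^\infty(\{y\})$ for every $x\in V(H)$. Taking the union over $x\in V(H)$ yields $N_G[V(H)]\subseteq P_G^\infty(\{y\})$.

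The base case is immediate from the definition: $P_G^0(\{y\})=N_G[y]$, so the closed neighborhood of $y$ is monitored at step $0$.

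For the inductive step, suppose $N_G[v]\subseteq P_G^{i}(\{y\})$ and $uv$ is a strong edge.
\begin{itemize}
\item Since $u\in N_G(v)$, the vertex $u$ is itself monitored, that is, $u\in P_G^{i}(\{y\})$.
\item The vertices $u$, $v$, and the at least $\ell$ common neighbors in $N_G(u)\cap N_G(v)$ all lie in $N_G[u]\cap N_G[v]\subseteq P_G^{i}(\{y\})$. These are at least $\ell+2$ distinct monitored vertices of $N_G[u]$.
\item Because $\card{N_G[u]}=r+1=k+\ell+2$, we get $\card{N_G[u]\setminus P_G^{i}(\{y\})}\le k$.
\end{itemize}
So by rule (2) of the monitoring process, $N_G[u]\subseteq P_G^{i+1}(\{y\})$. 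The sets $P_G^{i}$ are nested, which is clear from the definition because each monitored vertex with at most $k$ unmonitored neighbors contributes its own closed neighborhood. Hence the induction may be carried out in $P_G^\infty(\{y\})$ without tracking step indices.

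I do not expect a real obstacle here. The only point needing care is the count: $u$ and $v$ are counted in $N_G[u]$ separately from the common neighbors, so exactly $\ell+2$ known monitored vertices are needed, and $r+1-(\ell+2)=k$ is what makes the threshold work. This is precisely why strong edges were defined with the threshold $\ell$. Note also that the argument does not use $H\in\coverfamily$ or claw-freeness; it holds for every strong component.
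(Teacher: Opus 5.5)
Your proof is correct and follows essentially the same route as the paper. Along a strong edge, the already-monitored endpoint together with the at least $\ell$ common neighbors leaves at most $r-(\ell+1)=k$ unmonitored neighbors at the other endpoint, so its closed neighborhood becomes monitored; this is then chained along a path in $\stronggraph[V(H)]$ from $y$, and your explicit appeal to the nestedness of the sets $P_G^{i}(\{y\})$ (a vertex eligible to fire stays eligible as the monitored set grows) is a point the paper leaves implicit.
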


\begin{proof}
Note that $P_G^0(\{y\})=N_G[y]$.  Consider a strong edge $uv$ and suppose that $N_G[u]$ has already been monitored. Then $u$ and every vertex in $N_G(u)\cap N_G(v)$ are monitored. Since $uv$ is strong, the vertex $v$ therefore has at least $\ell+1$ monitored neighbors. It follows that the number of its unmonitored neighbors is at most
\[
  r-(\ell+1)=(k+\ell+1)-(\ell+1)=k.
\]
Thus $N_G[v]$ will be monitored by $P_G^\infty(\{y\})$.

For any $x\in V(H)$, choose a path $y=x_0,x_1,\ldots,x_t=x$ in $\stronggraph[V(H)]$.  Starting with $N_G[x_0]$ monitored and applying the preceding observation successively to the strong edges $x_{i-1}x_i$, we see that $N_G[x_i]$ is eventually monitored for every $i$.  Taking the union over $x\in V(H)$ gives $N_G[V(H)]\subseteq P_G^\infty(\{y\})$.
\end{proof}

We are now ready to prove Theorem~\ref{thm:main}.

\begin{proof}[Proof of Theorem~\ref{thm:main}]

Let $S=\{u_H:H\in\coverfamily\}$. By Lemma~\ref{lem:activate-component}, $N_G[V(H)]\subseteq P_G^\infty(\{u_H\})$ for every $H\in\coverfamily$. It follows that $P_G^\infty(\{u_H\})\subseteq P_G^\infty(S)$.  The cover condition now yields
\[
  V(G)=\bigcup_{H\in\coverfamily}N_G[V(H)]\subseteq P_G^\infty(S),
\]
so $S$ is a $k$-PD-set. By Lemma~\ref{lem:allocation}, $n\ge (r+1)\card{\coverfamily}$.
Since $\card{S}=\card{\coverfamily}$ and $r=k+\ell+1$,
\[
  \gpk(G)\leq\card{S}=\card{\coverfamily}
  \leq\frac{n}{r+1}
  =\frac{n}{k+\ell+2}.
\]
Moreover, Chen et al.~\cite{ChenLuYe2022} showed that the bound is tight.\qedhere
\end{proof}

\section{Asymptotically sharp bounds for regular graphs}
\label{sec:unrestricted}

In this section, we remove the claw-free assumption and study the $k$-power domination ratio for a fixed integer $k\geq1$. Before proving Theorem~\ref{thm:asymptotic}, we recall two results on domination. The first supplies regular graphs with large domination ratio for the lower-bound construction, and the second gives the universal upper bound through domination.

\begin{theorem}[Alon and Wormald \cite{AlonWormald2010}]\label{thm:large-domination}
For every $\varepsilon>0$ and all sufficiently large integers $d$, there exists a finite simple $d$-regular graph $H$ such that
\[
  \gamma(H)\geq(1-\varepsilon)\frac{\ln d}{d}\card{V(H)}.
\]
\end{theorem}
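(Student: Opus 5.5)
The plan is a first-moment argument for a random $d$-regular graph in the configuration (pairing) model. Fix $\varepsilon>0$. Let $d$ be large, let $n\to\infty$ with $dn$ even, and set $s=\lceil(1-\varepsilon)\frac{\ln d}{d}n\rceil$. Take a uniform random perfect matching on $dn$ points, grouped into $n$ cells of size $d$. For fixed $d$, the probability that the resulting multigraph is simple tends to $e^{-(d^2-1)/4}>0$. It therefore suffices to show that, in the pairing model, the expected number of dominating sets of size at most $s$ is $o(1)$. Then, with positive probability, the graph is simple and has $\gamma(H)>s$, which yields the required $H$. Monotonicity (supersets of dominating sets dominate) lets us treat only sets of size exactly $s$.

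\textbf{Step 1: counting candidate sets.} The number of candidate sets is
\[
\binom{n}{s}\le\Bigl(\frac{en}{s}\Bigr)^s=\exp\Bigl(s\ln\frac{ed}{(1-\varepsilon)\ln d}\Bigr)\le\exp\Bigl(n\frac{\ln^2 d}{d}\Bigr)
\]
for large $d$.

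\textbf{Step 2: probability that a fixed $S$ dominates.} Fix $S$ with $|S|=s$; its cells contain $sd$ points. The set $S$ dominates only if every vertex outside $S$ has at least one point matched into $S$. Choose a small $\delta=\delta(\varepsilon)>0$, and reveal the partners of the points of the first $\delta n$ outside vertices one point at a time. At every stage at least $dn(1-2\delta)$ points are unmatched, and at most $sd$ of them lie in $S$. Hence, conditionally on the history, a given vertex has all $d$ of its points matched outside $S$ with probability at least
\[
\Bigl(1-\frac{sd}{dn(1-2\delta)}\Bigr)^{d}\ge \exp\Bigl(-(1+o_d(1))\frac{(1-\varepsilon)\ln d}{1-2\delta}\Bigr)\ge d^{-1+\varepsilon/2},
\]
once $\delta$ is small and $d$ is large. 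The small correction from points already consumed is absorbed into $\delta$. Multiplying these conditional bounds gives
\[
\Pr[S\text{ dominates}]\le\bigl(1-d^{-1+\varepsilon/2}\bigr)^{\delta n}\le\exp\bigl(-\delta n\,d^{-1+\varepsilon/2}\bigr).
\]

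\textbf{Step 3: conclusion.} The expected number of dominating $s$-sets is at most
\[
\exp\Bigl(n\Bigl(\frac{\ln^2 d}{d}-\delta d^{-1+\varepsilon/2}\Bigr)\Bigr).
\]
Since $d^{\varepsilon/2}\gg\ln^2 d$, this tends to $0$ as $n\to\infty$ once $d$ is sufficiently large, depending only on $\varepsilon$ and $\delta$.

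The main obstacle is Step 2: the events ``$v$ is undominated'' are dependent in the pairing model, and a naive union-type count over designated partners gives a useless bound of roughly $(sd/n)^{n-s}$ with $sd/n\approx\ln d>1$. The sequential exposure above is meant to get around this. It sacrifices all but a $\delta$-fraction of the outside vertices, but keeps the exponent of $d$ in the per-vertex avoidance probability close to $-(1-\varepsilon)$. Because the gain $d^{\varepsilon/2}$ beats the polylogarithmic entropy term, losing the constant factor $\delta$ is harmless. If this bookkeeping proved delicate, the fallback would be a negative-correlation or switching bound for the events ``cell $v$ avoids $S$''.
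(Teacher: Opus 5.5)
Your proposal is correct. The paper does not prove this statement; it imports it from Alon and Wormald. Your first-moment argument for random $d$-regular graphs in the pairing model is the standard route to this bound, and your version is self-contained and sound.

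The only delicate step is Step~2, and it works. You expose partners only for points on processed vertices outside $S$. Hence any point of the current vertex $v$ that is already matched has its partner on a vertex outside $S$. Each newly exposed partner is uniform over at least $dn(1-2\delta)-1$ unmatched points, at most $sd$ of which lie in $S$. So for every history, the conditional probability that $v$ has no neighbour in $S$ is at least $\bigl(1-\frac{sd}{dn(1-2\delta)-1}\bigr)^d$. The chain rule, applied over the sigma-algebras generated by successive exposures, then gives your bound. You should state this observation about already-matched points explicitly, since it is what justifies the conditional bound.

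Keeping only $\delta n$ of the outside vertices costs nothing. The per-vertex avoidance probability $d^{-1+\varepsilon/2}$ beats the entropy term $\ln^2 d/d$ by a polynomial factor in $d$. A sharper count could use all $n-s$ outside vertices, for example by conditioning on the number of edges between $S$ and its complement and using hypergeometric negative dependence. That would tighten constants, but it is not needed here.

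Your argument also gives more than existence. The expected number of dominating $s$-sets is exponentially small in $n$, while the probability of a simple pairing tends to $e^{-(d^2-1)/4}>0$. Therefore a uniformly random $d$-regular graph satisfies $\gamma(H)>(1-\varepsilon)\frac{\ln d}{d}n$ asymptotically almost surely.
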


\begin{theorem}[Caro and Roditty \cite{CaroRoditty1985}]\label{thm:caro-roditty}
Let $G$ be a graph of order $n$ and minimum degree $\delta\geq2$.  Then
\[
  \gamma(G)\leq\left(1-\frac{\delta}{(\delta+1)^{1+1/\delta}}\right)n.
\]
\end{theorem}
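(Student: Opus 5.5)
The plan is the standard first-moment (alteration) argument. We pick a random set, repair it deterministically into a dominating set, and then optimize the sampling probability; the exact constant in the statement should come out of the optimization. Fix $p\in[0,1]$, to be chosen later. Form a random set $X\subseteq V(G)$ by including each vertex independently with probability $p$. Let $Y=V(G)\setminus N_G[X]$ be the set of vertices not dominated by $X$. Then $X\cup Y$ is a dominating set of $G$ for every outcome, since each vertex of $Y$ dominates itself. Hence $\gamma(G)\le \mathbb{E}\card{X}+\mathbb{E}\card{Y}$.

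Next we compute the two expectations. Clearly $\mathbb{E}\card{X}=pn$. A vertex $v$ lies in $Y$ exactly when no vertex of $N_G[v]$ is chosen. Since $\card{N_G[v]}\ge\delta+1$, this happens with probability at most $(1-p)^{\delta+1}$. Linearity of expectation then gives
\[
\gamma(G)\le n\bigl(p+(1-p)^{\delta+1}\bigr).
\]

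The only real step is choosing $p$ so that this matches the stated expression. Minimizing $f(p)=p+(1-p)^{\delta+1}$ gives $f'(p)=1-(\delta+1)(1-p)^{\delta}=0$, so $1-p=(\delta+1)^{-1/\delta}$. This value lies in $(0,1)$, so it is a legitimate probability. Substituting it,
\[
f(p)=1-(\delta+1)^{-1/\delta}+\frac{(\delta+1)^{-1/\delta}}{\delta+1}
=1-(\delta+1)^{-1/\delta}\cdot\frac{\delta}{\delta+1}
=1-\frac{\delta}{(\delta+1)^{1+1/\delta}},
\]
which is exactly the claimed bound. I expect no genuine obstacle: the computation is routine, and the hypothesis $\delta\ge2$ is not even needed for the argument (for $\delta\ge1$ the same choice of $p$ works). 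The main care point is the algebraic simplification above, which I would double-check. One could also note that a derandomized greedy version via conditional expectations yields the same bound, but that is not needed for the statement.
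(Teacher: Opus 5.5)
Your proof is correct. The paper gives no proof of this statement. It quotes the Caro--Roditty bound as a black box and uses it only with $\delta=r$, to get the upper bound $c_k(r)\le 1-r/(r+1)^{1+1/r}$ in Theorem~\ref{thm:asymptotic}. So there is no in-paper argument to compare against.

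What you wrote is the standard alteration argument: take a random set $X$ and add the undominated set $Y=V(G)\setminus N_G[X]$. Your refinement over the textbook version is to minimize $p+(1-p)^{\delta+1}$ exactly, via $1-p=(\delta+1)^{-1/\delta}$, rather than taking $p=\ln(\delta+1)/(\delta+1)$ and bounding $(1-p)^{\delta+1}\le e^{-p(\delta+1)}$. That exact optimization is what produces the constant $1-\delta(\delta+1)^{-1-1/\delta}$, and your algebra checks out.

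The cruder choice of $p$ would still give $c_k(r)\sim(\ln r)/r$. However, the explicit inequality in \eqref{eq:c-bounds} needs your exact optimization. Including your few lines would make the upper half of Theorem~\ref{thm:asymptotic} self-contained; the citation keeps the section shorter.

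Your remark that $\delta\ge2$ is not needed is also right: the argument works for every $\delta\ge1$, since then $(\delta+1)^{-1/\delta}\in(0,1)$. The restriction in the statement is presumably there only because for $\delta=1$ the resulting bound $3n/4$ is weaker than Ore's bound $n/2$. It plays no role in the paper's application, where $r$ is large.
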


The third ingredient is a propagation obstruction. A nonempty set $F\subseteq V(G)$ is a \emph{$k$-fort} if every vertex in $N_G[F]\setminus F$ has at least $k+1$ neighbors in $F$. The relevance of this definition is captured by the following result.

\begin{proposition}[Chen et al.\ \cite{ChenLuYe2022}]\label{prop:fort-obstruction}

If $F$ is a $k$-fort of a graph $G$ and $S$ is a $k$-PD-set of $G$, then
$S\cap N_G[F]\neq\varnothing$.
\end{proposition}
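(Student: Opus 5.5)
We argue by contradiction: assume $S$ is a $k$-PD-set with $S\cap N_G[F]=\varnothing$. We show by induction on $i$ that $P_G^{i}(S)\cap F=\varnothing$ for every $i\ge0$. Since $F$ is nonempty, this gives $P_G^\infty(S)\neq V(G)$, contradicting that $S$ is a $k$-PD-set.

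For the base case, note that $P_G^0(S)=N_G[S]$. If some $f\in F$ belonged to $N_G[S]$, then some $s\in S$ would lie in $N_G[f]\subseteq N_G[F]$. This is excluded by assumption, so $P_G^0(S)\cap F=\varnothing$.

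For the inductive step, assume $P_G^{i}(S)\cap F=\varnothing$ and suppose some $f\in F$ enters $P_G^{i+1}(S)$. By the definition of $P_G^{i+1}(S)$, there is a vertex $v\in P_G^{i}(S)$ with $f\in N_G[v]$ and $\card{N_G[v]\setminus P_G^{i}(S)}\le k$. We have $v\notin F$ by the induction hypothesis, and $v\in N_G[f]\subseteq N_G[F]$, so $v\in N_G[F]\setminus F$. The fort condition then gives $v$ at least $k+1$ neighbors in $F$. By the induction hypothesis all of them lie outside $P_G^{i}(S)$, so $\card{N_G[v]\setminus P_G^{i}(S)}\ge k+1$, a contradiction. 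Hence $P_G^{i+1}(S)\cap F=\varnothing$, completing the induction.

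The only step needing care is the inductive step. There, the vertex $v$ doing the monitoring must lie in $N_G[F]\setminus F$ rather than in $F$, and this is exactly what the induction hypothesis provides. Note also that the step $i=0$ uses the hypothesis $S\cap N_G[F]=\varnothing$ in full, not merely $S\cap F=\varnothing$, since domination from a vertex adjacent to $F$ would already monitor part of $F$.
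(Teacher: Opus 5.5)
Your proof is correct. It is the standard fort argument: you show by induction that $P_G^{i}(S)\cap F=\varnothing$ for every $i$. The key point is that any vertex monitoring into $F$ must lie in $N_G[F]\setminus F$, and such a vertex still has at least $k+1$ unmonitored neighbors in $F$, so it cannot propagate.

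The paper does not prove this proposition itself; it cites it from Chen et al. Your induction passes to $P_G^\infty(S)$ because, under the paper's definition, $P_G^\infty(S)$ equals some finite stage $P_G^{i_0}(S)$.
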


We are now ready to prove Theorem~\ref{thm:asymptotic}.

\begin{proof}[Proof of Theorem~\ref{thm:asymptotic}]
Fix $\varepsilon>0$.
Every dominating set is a $k$-PD-set. Hence, for every connected $r$-regular graph $G$, Theorem~\ref{thm:caro-roditty} applied with minimum degree $r$ gives
\[
  \gamma_{P,k}(G)\leq\gamma(G)\leq\left(1-\frac{r}{(r+1)^{1+1/r}}\right)\card{V(G)}.
\]
So the upper bound in \eqref{eq:c-bounds} holds.

For the lower bound, recall that $q=2\lceil(k+1)/2\rceil$ and $d=\lfloor r/q\rfloor$, and put $s=r-qd$.  Thus $q$ is even, $q\geq k+1$, and $0\leq s\leq q-1$.
Let $J$ be a simple $s$-regular graph on $q$ vertices. Such a graph exists since $q$ is even and $0\leq s\leq q-1$. For instance, arrange the $q$ vertices on a circle and join each vertex to its $\lfloor s/2\rfloor$ nearest vertices in each direction. When $s$ is odd, also join each pair of antipodal vertices. Take a graph $H$ supplied by Theorem~\ref{thm:large-domination}. Since the domination number is additive over connected components, some connected component of $H$ satisfies the same lower bound. Replacing $H$ by this component, we may assume that $H$ is connected. Replace each vertex $v$ of $H$ by a copy $F_v$ of $J$, and join every vertex of $F_u$ to every vertex of $F_v$ whenever $uv\in E(H)$. The resulting graph $G$ is connected because $H$ is connected. Each vertex of $F_v$ has $s$ neighbors within $F_v$ and $qd$ neighbors in the copies corresponding to the neighbors of $v$ in $H$, for a total of $s+qd=r$ neighbors. Thus $G$ has order $q\card{V(H)}$ and is $r$-regular.

We next show that each $F_v$ is a $k$-fort. By construction, if $x\in N_G[F_v]\setminus F_v$, then $x\in F_u$ for some $u\in N_H(v)$. All possible edges between $F_u$ and $F_v$ were added, so $x$ is adjacent to every vertex of $F_v$. Consequently, $\card{N_G(x)\cap F_v}=q\geq k+1$. Hence $F_v$ is a $k$-fort.

Let $S$ be any $k$-PD-set of $G$. By Proposition~\ref{prop:fort-obstruction}, $S\cap N_G[F_v]\neq\varnothing$ for every $v\in V(H)$. Let $T=\{v\in V(H):S\cap F_v\neq\varnothing\}$. For each $v\in V(H)$, the condition $S\cap N_G[F_v]\neq\varnothing$ implies that $S\cap F_u\neq\varnothing$ for some $u\in N_H[v]$. Thus $T\cap N_H[v]\neq\varnothing$ for every $v\in V(H)$, so $T$ dominates $H$. Since the copies $F_u$ are pairwise disjoint, $\card{T}\leq\card{S}$. By Theorem~\ref{thm:large-domination}, we have
\[
  \card{S}\geq\card{T}\geq\gamma(H)\geq(1-\varepsilon)\frac{\ln d}{d}\card{V(H)},
\]
and division by $\card{V(G)}=q\card{V(H)}$ proves the lower bound in \eqref{eq:c-bounds}.  Since $q$ is fixed as $r\to\infty$, the two bounds have the same leading term:
\[
  1-\frac{r}{(r+1)^{1+1/r}}=1-\frac{r}{r+1}e^{-\frac{\ln (r+1)}{r}}=1-e^{-\frac{\ln r}{r}}+O\left(\frac{1}{r}\right)=(1+o(1))\frac{\ln r}{r}
\]
and
\[\frac{\ln\lfloor r/q\rfloor}{q\lfloor r/q\rfloor}=(1+o(1))\frac{\ln r}{r}.\]
Since $\varepsilon>0$ is arbitrary, the asymptotic conclusion follows.
\end{proof}

\section*{Acknowledgments}

This work was supported by the National Natural Science Foundation of China (No. 12331014), Science and Technology Commission of Shanghai Municipality (No. 22DZ2229014), Natural Science Foundation of Fujian Province, China (No. 2024J01875), Science-Technology Foundation of Putian University (No. 2023059), and the Fundamental Research Funds for the Central Universities.

\end{document}